\theoremstyle{plain}
\newtheorem{thm}{Theorem}[section]
\newtheorem{cor}[thm]{Corollary}
\newtheorem{lem}[thm]{Lemma}
\newtheorem*{claim}{Claim}
\theoremstyle{definition}
\newtheorem{ntt}[thm]{}
\newtheorem{rem}[thm]{Remark}
\newcommand{\zz}{\mathbb{Z}} 
\newcommand{\E}{\mathrm{E}}
\newcommand{\F}{\mathrm{F}_4}
\newcommand{\w}{\bar{\omega}}
\newcommand{\edge}[1]{\ar@{-}[#1]}
\newcommand{\lulab}[1]{\ar@{}[l]_<<{#1}}
\newcommand{\rulab}[1]{\ar@{}[r]^<<{#1}}
\newcommand{\ldlab}[1]{\ar@{}[l]^<<{#1}}
\newcommand{\rdlab}[1]{\ar@{}[r]_<<{#1}}
\newcommand{\node}{*=0{\bullet}}
\DeclareMathOperator{\CH}{\mathrm{CH}}
\title{Zero-cycles on projective varieties
and the norm principle}
\author{Philippe Gille, Nikita Semenov\footnote{Supported partially
by DFG, project GI706/1-1.}}
\date{}
\begin{document}
\maketitle
\begin{abstract}
Using the Gille-Merkurjev norm principle we compute in a uniform way
the image of the degree map for quadrics (Springer's theorem),
for twisted forms of maximal orthogonal Grassmannians (theorem of Bayer-Fluckiger
and Lenstra), for $\E_6$- (Rost's theorem), and $\E_7$-varieties.

Keywords: norm principle, algebraic groups, zero-cycles.
\end{abstract}

\section{Introduction}
Let $G$ be a simple algebraic group over a field $k$ and $X$ a projective $G$-homogeneous
variety. Consider the degree map
$$\deg\colon\CH_0(X)\to\zz.$$
The goal of the present paper is to provide a method to compute
the image of this map (see \cite{PSZ} for the computation of its kernel).

This problem has a long history starting probably with the Springer
theorem which says that an anisotropic quadratic form remains anisotropic over
odd degree field extensions. This statement is equivalent to the fact
that the image of the degree map when $X$ is an anisotropic quadric equals $2\zz$.

To stress the difficulty of the above problem note that a computation
of the degree map for the varieties of Borel subgroups of groups of type $\E_8$
implies Serre's Conjecture II for fields of
cohomological dimension $\le 2$ \cite{Gi01}.

Moreover, the above problem generalizes Serre's question,
whether the map
$$\mathrm{H}^1(k,G_0)\to\prod\mathrm{H}^1(K_i,G_0)$$ has trivial kernel
when $K_i$ are finite field extensions of $k$ such that ${\gcd[K_i:k]=1}$
and $G_0$ is a split group.

The image of the degree map is known in the following cases: $X$ is a
quadric (Springer's theorem), $X$ is a twisted form of a maximal orthogonal
Grassmannian (theorem of Bayer-Fluckiger and Lenstra \cite{BFL90}), $X$ is the
variety of Borel subgroups of a group of type $\F$, $\E_6$ (a theorem of Rost,
where cohomological invariants of Albert algebras are used), and $\E_7$
(Gille's theorem \cite{Gi97}, where the norm principle is used). Note also that
there are numerous papers of M.~Florence, R.~Parimala, B.~Totaro, and many others
concerning closely related problems.

In the present paper we apply the Gille-Merkurjev norm
principle \cite{Gi97}, \cite{Me96}, \cite{BM02} and give a uniform proof of the above
results.
Apart from this, we compute the image of the degree map for the varieties
of parabolic subgroups of type $7$ of groups $G$ of type $\E_7$ and
prove that anisotropic groups of type $\E_7$
remain anisotropic over odd degree field extensions.
Note that this
property is used in \cite[Corollary~6.10]{PS}
to relate the Rost invariant of $G$
and its isotropity.

\section{Norm principle and strategy of the proof}
\begin{ntt}
Let $k$ be a perfect field with $\mathrm{char}\,k\ne 2,3$,
$\Gamma=\mathrm{Gal}(\bar k/k)$ the absolute Galois group,
$G$ a connected reductive algebraic group over $k$,
$G'=[G,G]$ the commutator subgroup, $\Delta$ its Dynkin diagram, and $\Delta_0$ its
Tits index (see \cite{Ti65}).
\end{ntt}

\begin{ntt}[Special cocharacters]
Let $G_1$ be a reductive algebraic group over $k$ and
\begin{equation}\label{sequence}
1\to G_1\to G\overset{f}{\to}T=\mathbb{G}_m\to 1
\end{equation}
an exact sequence.
The cocharacter group $T_*$ can be canonically identified with the group
$\zz$. A cocharacter $\varphi\in T_*$ is called {\it $f$-special}, if
there is a $k$-homomorphism $g\colon\mathbb{G}_m\to G$ such that $f\circ g=\varphi$.
\end{ntt}

\begin{ntt}[Set $X(\varphi)$]\label{sec23}
Denote $Z'=G/G'$, $C$ the center of the simply connected cover of $G'$, $Z$ the
center of $G$, and $\mu$ the center of $G'$.

We can represent the homomorphism
$f$ as a composition $G\to Z'\to T$. In particular, there is the
induced homomorphism $\alpha\colon {Z'}^\Gamma_*\to T_*$ between the cocharacter groups.
The exact sequence $$1\to\mu\to Z\to Z'\to 1$$ induces
a homomorphism $\beta\colon {Z'}^\Gamma_*\to\mu(-1)^\Gamma$, and the canonical epimorphism
$C\to\mu$ induces a map $\gamma\colon C_*^\Gamma\to\mu(-1)^\Gamma$,
where $\mu(-1)$ is the Tate twist, i.e.,
$\mu(-1)=\mathrm{Hom}(\mu_n,\mu)$ for any $n$ with $\mu^n=1$.
For a cocharacter $\varphi\in T_*$ we define a subset $X(\varphi)\subset C_*^\Gamma$
as the set $\gamma^{-1}(\beta(\alpha^{-1}(\{\varphi\})))$.
\end{ntt}

\begin{ntt}[Set $\Omega(\varphi)$]
From now on we assume that the Dynkin diagram $\Delta$ has no multiple edges.
Following \cite[(5.8)]{Me96}
we identify $C_*$ and the character group $C^*$ and consider
$X(\varphi)$ as a subset of $C^*$.
Let $\w_i$ denote the $i$-th fundamental weight of the simply connected
cover of $G'$ (Enumeration of simple roots follows Bourbaki).
Define now $\Omega(\varphi)$ as the set of all subsets $\Theta\subset\Delta$
such that the elements
$\{\sum_{i\in I}\w_i\vert_C,\ I\subset\Delta\setminus\Theta \text{ a }
\Gamma\text{-orbit}\}$
generate a subgroup of $C^{*\Gamma}$ that intersects $X(\varphi)$.
\end{ntt}

\begin{ntt}[Type of a parabolic subgroup]
It is well-known that there is a bijective correspondence
between the conjugancy classes of parabolic subgroups of $G'_{\bar k}$
and the subsets of the set $\Delta$ of simple roots. 

The {\it type} of a parabolic subgroup is the corresponding subset of $\Delta$.
Under this identification the Borel subgroup has type $\emptyset$.
If $P$ is a maximal parabolic subgroup of type $\Delta\setminus\{\alpha_i\}$,
where $\alpha_i$ is the $i$-th simple root, then for
simplicity of notation we say that $P$ is of type $i$.
\end{ntt}

\begin{ntt}[Tits homomorphism]\label{tits}
Let $$\beta\colon C^{*\Gamma}\to\mathrm{Br}(k)$$ be the Tits homomorphism
for the simply connected cover of $G'$ defined in \cite{Ti71}.
In order to compute the sets $\Omega(\varphi)$ we need to know the restrictions of
the fundamental weights $\w_i$ to $C$
and their images under the Tits homomorphism.

Below we describe them for groups of type $^1{\mathrm{D}_n}$, $\E_6$, and
$\E_7$. We use graphical notation, where the algebra over a vertex $i$
of the Dynkin diagram stands for the image $\beta(\w_i\vert_C)$.
Apart from this, the restriction $\w_i\vert_C$ is trivial iff the respective
algebra is $k$.

\noindent
{\bf Type $\mathrm{D}_n$:}
A simply connected group of inner type $\mathrm{D}_n$ has the form
$\mathrm{Spin}(A,\sigma)$, where $A$ is a central simple algebra of degree $2n$
with an orthogonal involution $\sigma$ of the first kind with trivial
discriminant.

For the character group $C^*$ of the center
of $\mathrm{Spin}(A,\sigma)$ we have
 $$C^*=\{0,\chi,\chi^+,\chi^-\},$$ where $\chi$
(resp. $\chi^+$, $\chi^-$) is the restriction of the fundamental weight
$\w_1$ (resp. $\w_{n-1}$, $\w_n$) to the center.

Let $C^\pm(A,\sigma)$ be the direct summands of the Clifford algebra $C_0(A,\sigma)
=C^+(A,\sigma)\oplus C^-(A,\sigma)$. We have

\xymatrix@=3em@R=2em{
&&&&& \node \lulab{C^+(A,\sigma)}\\
& \node \lulab{A} \edge{r} & \node \lulab{k} \ar@{..}[r] &
\node \lulab{A\text{ or }k} \edge{r}& \node \lulab{k\text{ or }A}\edge{ru}\edge{rd} & \\
&&&&& \node \lulab{C^-(A,\sigma)}
}

\medskip

We associate the Tits algebras to the last two vertices $n-1$ and $n$ in such a
way that for $\varepsilon=+$ (resp. $\varepsilon=-$) the algebra
$C^\varepsilon(A,\sigma)$ splits
over the field of rational functions of the projective homogeneous
variety of maximal parabolic subgroups of type $P_{n-1}$ (resp. $P_{n}$).
The latter are two irreducible components of the variety of
$2n^2$-dimensional isotropic right ideals $I$ of $A$ with respect to $\sigma$.

\noindent
{\bf Type $\mathrm{E}_6$:}
The Tits algebra is a certain central
simple algebra $A$ of index $1$, $3$, $9$, or $27$ and of exponent $1$
or $3$.

\xymatrix@=2em{
& \node \lulab{A} \edge{r}& \node \lulab{A^{\otimes 2}} \edge{r} & 
\node \lulab{k} \edge{r}\edge{d} & \node \lulab{A}
\edge{r} & \node \lulab{A^{\otimes 2}}\\
&&& \node \lulab{k} &&
}

\noindent
{\bf Type $\mathrm{E}_7$:}
The Tits algebra is a certain central
simple algebra $A$ of index $1$, $2$, $4$, or $8$ and of exponent $1$
or $2$.

\xymatrix@=2em{
&\node\lulab{k}\edge{r}& \node \lulab{k} \edge{r}& \node \lulab{k} \edge{r}\edge{d} & 
\node \lulab{A} \edge{r} & \node \lulab{k}
\edge{r} & \node \lulab{A}\\
&&& \node \lulab{A} &&&
}
\end{ntt}

Under the above assumptions the following lemmas hold:

\begin{lem}[{\cite[Lemma~3.4]{Me96}}]\label{l1}
Let $K/k$ be a finite field extension lying in the algebraic closure $\bar k$
and let $\varphi\in T_*$. If the cocharacter $\varphi$ is $f_K$-special,
then the cocharacter $[K:k]\varphi$ is $f$-special.
\end{lem}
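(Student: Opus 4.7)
The plan is to use Weil restriction of scalars along $K/k$ combined with the norm map for $\mathbb{G}_m$. Given a $K$-homomorphism $g\colon\mathbb{G}_{m,K}\to G_K$ with $f_K\circ g=\varphi_K$ (provided by the $f_K$-specialness of $\varphi$), the adjunction between base change and Weil restriction produces a $k$-homomorphism $\tilde g\colon\mathbb{G}_{m,k}\to R_{K/k}(G_K)$ satisfying $R_{K/k}(f_K)\circ\tilde g=\iota_T\circ\varphi$, where $\iota_T\colon T\hookrightarrow R_{K/k}(T_K)$ is the canonical inclusion. Since $T=\mathbb{G}_m$, the norm map $N_{K/k}\colon R_{K/k}(T_K)\to T$ is available and satisfies $N_{K/k}\circ\iota_T=[K:k]\cdot\mathrm{id}_T$, so the composite $N_{K/k}\circ R_{K/k}(f_K)\circ\tilde g$ is already the desired $[K:k]\varphi$ as a map $\mathbb{G}_{m,k}\to T$. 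The remaining task is to upgrade this to a $k$-homomorphism taking values in $G$ rather than only in $T$.

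To do so, I would reduce to the torus case, where norms are genuinely available at the level of cocharacter lattices. The image of $g$ lies in some maximal $K$-torus $T'\subset G_K$. Fix a maximal $k$-torus $T_0\subset G$, and over $\bar k$ choose $y\in G(\bar k)$ with $y\,T'_{\bar k}\,y^{-1}=T_{0,\bar k}$, writing $g_0:=y\,g_{\bar k}\,y^{-1}\in X_*(T_0)$. The candidate lift is the Galois average
$$h_0:=\sum_{\sigma\in\Gamma/\Gamma_K}\sigma(g_0)\in X_*(T_0),$$
where $\Gamma_K=\mathrm{Gal}(\bar k/K)$. Projecting by $f$ yields $f\circ h_0=\sum_\sigma\sigma(\varphi)=[K:k]\varphi$, since $\varphi\in T_*=\zz$ is $\Gamma$-fixed. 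Provided $h_0$ is $\Gamma$-invariant, it descends to a $k$-cocharacter $h\colon\mathbb{G}_{m,k}\to T_0\subset G$ with $f\circ h=[K:k]\varphi$, completing the argument.

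The main obstacle is the Weyl-group ambiguity in the construction: the conjugator $y$ is determined only modulo the normalizer $N_G(T_{0,\bar k})$, so $g_0$ is a priori only well-defined up to the action of the Weyl group $W$. For $\sigma\in\Gamma_K$ one computes $\sigma(g_0)=w_\sigma\cdot g_0$ with $w_\sigma\in W$ given by a $1$-cocycle $\sigma\mapsto\sigma(y)y^{-1}\bmod T_0$, and the sum $h_0$ makes sense only once this cocycle is controlled. The delicate step is to verify that $h_0$ is both independent of the ambiguity and $\Gamma$-invariant under the action permuting the cosets $\Gamma/\Gamma_K$, which amounts to a cocycle computation in $H^1(\Gamma_K,W)$ and is the technical core of the Gille--Merkurjev norm-principle framework invoked in the statement. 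Once this is handled, the projection calculation finishes the proof.
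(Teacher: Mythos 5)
The paper itself offers no proof of this lemma: it is imported verbatim from Merkurjev's norm-principle paper ([Me96, Lemma~3.4]), so the only ``paper proof'' to compare against is the citation. Judged on its own terms, your proposal has a genuine gap precisely at the point you label ``the technical core.'' The first half (Weil restriction plus the norm map on $\mathbb{G}_m$) is correct but, as you yourself note, only exhibits $[K:k]\varphi$ as a cocharacter factoring through $R_{K/k}(G_K)$; since $G$ is non-commutative there is no norm homomorphism $R_{K/k}(G_K)\to G$ over $T$, so nothing has yet been gained toward $f$-specialness for the original sequence. The entire content of the lemma is therefore carried by your second step, and that step is asserted rather than proved.

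Concretely: the element $g_0=y\,g_{\bar k}\,y^{-1}$ is well defined only up to the action of $W$, and for $\tau\in\Gamma_K$ one has $\tau(g_0)=w_\tau\cdot g_0$ with $(w_\tau)$ a generally nontrivial cocycle --- its class is exactly what distinguishes the maximal $K$-torus $T'$ from a conjugacy-split one. Consequently the sum $\sum_{\sigma\in\Gamma/\Gamma_K}\sigma(g_0)$ is not a well-defined element of $X_*(T_{0,\bar k})$: changing the coset representatives or the conjugator $y$ changes individual summands by Weyl elements, and there is no reason any choice of the sum is $\Gamma$-invariant. The only canonical output of your construction is its image under $f_*$ (because $W$ acts trivially on the cocharacters of $Z'=G/G'$, hence on $T_*$), which is how you recover $[K:k]\varphi$ downstairs --- but that computation survives precisely because it forgets the ambiguity you would need to control. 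Settling this is not a routine computation in $H^1(\Gamma_K,W)$; for a fixed target torus $T_0$ the map $X_*(T_0)^{\Gamma}\to T_*$ can be far from surjective, so whether a $\Gamma$-invariant lift of $[K:k]\varphi$ exists in $T_0$ is essentially the difficulty the lemma is meant to resolve, not a side issue. As written, the proposal correctly isolates the obstruction but does not overcome it, so it does not constitute a proof.
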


\begin{lem}[{\cite[Theorem~5.6]{Me96}}]\label{l3}
For a cocharacter $\varphi\in T_*$ the following conditions are equivalent:
\begin{enumerate}
\item $\varphi$ is $f$-special;
\item there exists a parabolic subgroup of $G$ defined over $k$ whose
type is contained in $\Omega(\varphi)$.
\end{enumerate}
\end{lem}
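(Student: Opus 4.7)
The plan is to prove the equivalence by passing through the standard cocharacter-to-parabolic assignment $g \mapsto P(g)$, where
\[
P(g) = \{x \in G : \lim_{t \to 0} g(t)\, x\, g(t)^{-1} \text{ exists}\}
\]
is the $k$-parabolic attached to a $k$-cocharacter $g:\mathbb{G}_m \to G$, whose type $\Theta(g) \subset \Delta$ is the set of simple roots vanishing on a dominant conjugate of $g$.

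For (1) $\Rightarrow$ (2), I start from a $k$-lift $g:\mathbb{G}_m \to G$ of $\varphi$ and put $\Theta = \Theta(g)$. The crucial step is to attach to $g$ an element $c(g) \in C^{*\Gamma}$ which (a) lies in $X(\varphi) = \gamma^{-1}(\beta(\alpha^{-1}(\{\varphi\})))$, by chasing $g$ through the composition $G \to Z' \to T$ together with the central extensions $1 \to \mu \to Z \to Z' \to 1$ and $C \twoheadrightarrow \mu$; and (b) expands as $c(g) = \sum_i \langle \w_i, g \rangle\, \w_i|_C$ in the fundamental weight basis, with $\langle \w_i, g \rangle = 0$ for $i \in \Theta$. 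Because $g$ is $\Gamma$-invariant the integer coefficients are constant on $\Gamma$-orbits, so $c(g)$ lies in the subgroup of $C^{*\Gamma}$ generated by the orbit sums $\sum_{i \in I} \w_i|_C$ for $\Gamma$-orbits $I \subset \Delta \setminus \Theta$. This exhibits $\Theta \in \Omega(\varphi)$.

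For (2) $\Rightarrow$ (1), suppose a $k$-parabolic $P$ of type $\Theta \in \Omega(\varphi)$ is given, together with an element $c \in X(\varphi)$ expressed as a $\zz$-linear combination of orbit sums $\sum_{i \in I} \w_i|_C$ with $I \subset \Delta \setminus \Theta$. I reverse the correspondence above: the coefficients of $c$ in the fundamental weight basis determine a $\Gamma$-invariant cocharacter $g_0$ of a maximal torus of $G$, well-defined up to the coroot lattice of $G'$ and up to cocharacters of the center of $G$. The existence of the $k$-parabolic $P$ of type $\Theta$ supplies, through its $k$-Levi decomposition and a $k$-maximal torus of the Levi, the rigidity needed to descend $g_0$ to a genuine $k$-cocharacter. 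By the defining relation $\gamma(c) = \beta(\alpha^{-1}(\{\varphi\}))$, composing the resulting $g$ with $f$ recovers $\varphi$.

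The main obstacle is the construction of the assignment $g \mapsto c(g)$ in the first direction and the verification that its image lies exactly in $X(\varphi)$. This amounts to a careful diagram chase through the central extensions relating $C$, $\mu$, $Z$, and $Z'$, together with an interpretation of $\mu(-1)^\Gamma$ as the group classifying obstructions to lifting cocharacters through $Z \to Z'$. Once that correspondence is in hand, both directions reduce to elementary linear algebra on the weight lattice restricted to $C$, with the Galois descent step for (2) $\Rightarrow$ (1) furnished precisely by the hypothesis that $\Theta$ is the type of a parabolic defined over $k$.
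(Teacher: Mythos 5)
First, a point of reference: the paper does not prove this lemma at all --- it is quoted verbatim from Merkurjev \cite[Theorem~5.6]{Me96}, so there is no internal proof to compare against. Your sketch does reconstruct the architecture of Merkurjev's actual argument: attach to a $k$-lift $g$ of $\varphi$ the parabolic $P(g)$ together with the class of the ``semisimple part'' of $g$ in $C_*\simeq C^*$, and conversely use the centre of a $k$-Levi of the given parabolic as the torus inside which a $\Gamma$-invariant lift of $\varphi$ can be assembled. So the route is the right one.

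As written, however, there is a concrete error and the two genuinely hard verifications are only announced. The error: you define $\Theta(g)$ as the set of simple roots vanishing on a dominant conjugate of $g$, i.e.\ $\Theta=\{i:\langle\alpha_i,g\rangle=0\}$, but then expand $c(g)=\sum_i\langle\w_i,g\rangle\,\w_i|_C$ and assert that these coefficients vanish for $i\in\Theta$. The coefficient of the $i$-th fundamental \emph{coweight} in the semisimple part of $g$ is $\langle\alpha_i,g\rangle$, not $\langle\w_i,g\rangle$; the latter pairings involve the inverse Cartan matrix and do not vanish for $i\in\Theta$ (already in type $\mathrm{A}_2$, for $g$ the first fundamental coweight one has $\langle\alpha_2,g\rangle=0$ but $\langle\w_2,g\rangle\neq0$). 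With $\langle\alpha_i,g\rangle$ in place of $\langle\w_i,g\rangle$ the claim is correct and is exactly what exhibits $\Theta(g)\in\Omega(\varphi)$. Beyond that, the two steps you yourself flag as ``the main obstacle'' carry essentially all the content of the lemma and are not carried out: (i) that $\gamma(c(g))$ equals $\beta$ of the image of $g$ in ${Z'}^\Gamma_*$, so that $c(g)$ really lies in $X(\varphi)$ and not merely in the subgroup generated by the orbit sums; and (ii) in the converse direction, that the chosen rational coweight, corrected by a central cocharacter, is an \emph{integral} $\Gamma$-fixed cocharacter --- this integrality is precisely where the hypothesis $\gamma(c)=\beta(\psi)$ for some $\psi\in\alpha^{-1}(\varphi)$ is used, and it is this $\psi$ (not $\varphi$ directly; note that $\gamma(c)=\beta(\alpha^{-1}(\{\varphi\}))$ does not parse, the right-hand side being a set) that pins down the central component of $g$ and forces $f\circ g=\varphi$. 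In its present form this is a correct plan following the cited source rather than a proof.
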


\begin{lem}[{\cite[Proposition~5.8]{Me96}}]\label{l4}
Let $\beta\colon C^{*\Gamma}\to\mathrm{Br}(k)$ be the Tits homomorphism
for the simply connected cover of $G'$. Assume that the
Dynkin diagram $\Delta$ has no multiple edges.
If a cocharacter $\varphi\in T_*$ is $f$-special, then $0\in\beta(X(\varphi))$.
\end{lem}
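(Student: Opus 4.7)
The plan is to exhibit an element $x \in X(\varphi) \subset C^{*\Gamma}$ whose Tits class vanishes, built from the $f$-special cocharacter through the central isogeny $\pi \colon G'_{sc} \times Z(G)^0 \to G$. Writing $\beta'$ temporarily for the connecting homomorphism of \ref{sec23} to distinguish it from the Tits homomorphism $\beta$ of the present statement, I must find $x \in C^{*\Gamma}$ with $\gamma(x) = \beta'(\bar g)$ for some $\bar g \in \alpha^{-1}(\{\varphi\})$ and with $\beta(x) = 0$.

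The $f$-special assumption furnishes a $k$-morphism $g \colon \mathbb{G}_m \to G$ with $f \circ g = \varphi$; its image $\bar g \in (Z')_*^\Gamma$ under the canonical projection $G \to G/G' = Z'$ satisfies $\alpha(\bar g) = \varphi$, so any $x$ with $\gamma(x) = \beta'(\bar g)$ lies in $X(\varphi)$. Choose a maximal $k$-torus $T_G \subset G$ containing $g(\mathbb{G}_m)$ and let $T_{sc} \subset G'_{sc}$ be the corresponding maximal $k$-torus; then $\pi$ restricts to an isogeny $T_{sc} \times Z(G)^0 \twoheadrightarrow T_G$ of $k$-tori with finite central kernel $N$ surjecting onto $C$. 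Over $\bar k$ the cocharacter $g$ need not lift along $\pi$, but its failure to lift is a canonical $\Gamma$-invariant class in $N(-1)$, whose projection to $C_*^\Gamma$ (under the identification $C_* \cong C^*$ made uniform by the no-multiple-edges hypothesis) provides the sought $x \in C^{*\Gamma}$. A diagram chase among the connecting maps of the exact sequences $1 \to \mu \to Z \to Z' \to 1$ and $C \twoheadrightarrow \mu$ then yields $\gamma(x) = \beta'(\bar g)$, so indeed $x \in X(\varphi)$. The Tits class of $x$ vanishes because $x$ lies in the image of $(T_{sc}^*)^\Gamma \to C^{*\Gamma}$, namely the restriction to $C$ of the $\Gamma$-invariant character of $T_{sc}$ furnished by the $\bar k$-lift of $g$ inside $T_{sc} \times Z(G)^0$, and this image is precisely the kernel of the Tits homomorphism.

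The principal obstacle is the diagram chase: one must reconcile the various Tate-twist identifications and track Galois-equivariance through the isogeny $\pi$ and its restriction to tori. All the geometric input is confined to the production of $g$ and the choice of a maximal $k$-torus $T_G$ containing its image; the remainder is cohomological bookkeeping in the spirit of \cite[\S5]{Me96}, where the no-multiple-edges hypothesis ensures that the duality identifications $C_* \cong C^*$ match the conventions used in the definition of $X(\varphi)$.
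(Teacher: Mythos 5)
The paper does not actually prove this lemma: it is quoted verbatim from Merkurjev \cite[Proposition~5.8]{Me96}, so there is no internal argument to compare yours against. Your outline does follow what is essentially Merkurjev's route --- realize the $f$-special $\varphi$ by a $k$-homomorphism $g\colon\mathbb{G}_m\to G$, put its image inside a maximal $k$-torus $T_G$, take the obstruction to lifting $g$ along the central isogeny $T_{sc}\times Z(G)^0\to T_G$, push it into $C_*\cong C^*$, and show the resulting class lies in $X(\varphi)$ and has split Tits algebra.

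Two points need attention, one of them a genuine error. First, your splitting step rests on the assertion that the image of $(T_{sc}^*)^\Gamma\to C^{*\Gamma}$ is \emph{precisely} the kernel of the Tits homomorphism. That is false in general: for $G=\mathrm{SL}_1(A)$ with $A$ of degree $4$ and exponent $2$, and $T_{sc}$ the norm-one torus of a quartic splitting field of $A$, one computes $(T_{sc}^*)^\Gamma=0$ while $\ker\beta=2\mathbb{Z}/4\mathbb{Z}\neq 0$. What is true, and is all your argument actually uses, is the inclusion: a character of $C$ extending to a $\Gamma$-invariant character of the preimage $T_{sc}$ of a maximal $k$-torus of $G$ has split Tits algebra. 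That inclusion is itself a nontrivial input (it amounts to the Tits class dying in $H^2(k,T_{sc})$; see \cite[\S 4]{Me96} or \cite{Ti71}) and should be cited or proved rather than packaged as a description of $\ker\beta$. Second, the two verifications you explicitly defer --- that $\gamma(x)$ equals the connecting-map image of $\bar g$ from \ref{sec23}, so that $x\in X(\varphi)$, and that under the simply-laced identification $C_*\cong C^*$ your class, built from a \emph{cocharacter} obstruction, really is the restriction of a $\Gamma$-invariant \emph{character} of $T_{sc}$ --- constitute the entire content of the lemma; the duality and Tate-twist bookkeeping there is exactly where the no-multiple-edges hypothesis is consumed, so as written this is an outline rather than a proof. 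A minor slip: the kernel $N$ of $G'_{sc}\times Z(G)^0\to G$ need not surject onto $C$ (its image in $C$ is the preimage of $\mu\cap Z(G)^0$), but nothing in your argument depends on that surjectivity.
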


\begin{thm}\label{main}
Let $X$ be an anisotropic smooth projective variety over $k$ and $p$ a prime number.
In the above notation assume that the following conditions hold:
\begin{enumerate}
\item For any field extension $K/k$ and for any coprime to $p$
cocharacter $\varphi$, if $0\in\beta_K(X(\varphi))\subset\mathrm{Br}(K)$ and
$G'$ has a parabolic subgroup defined
over $K$ whose type is contained in $\Omega(\varphi)$, then $X(K)\ne\emptyset$;
\item For any field extension $K/k$ and for any coprime to $p$
cocharacter $\varphi$ if $X(K)\ne\emptyset$,
then there exists a parabolic subgroup of $G'$ of type contained in
$\Omega(\varphi)$ defined over $K$.
\end{enumerate}
Then $\deg(\CH_0(X))\subset p\zz.$
\end{thm}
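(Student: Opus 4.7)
The plan is to combine the norm principle (Lemma \ref{l1}) with Lemmas \ref{l3} and \ref{l4} to transport a hypothetical low-degree closed point of $X$ down to a rational point over $k$, which would contradict the anisotropy of $X$. The argument should be entirely formal once the lemmas are chained with the two assumptions.

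First I would reduce to a statement about closed points. Since $\CH_0(X)$ is generated by classes of closed points and $\deg[x]=[k(x):k]$, the inclusion $\deg(\CH_0(X))\subset p\zz$ is equivalent to the claim that $p\mid[k(x):k]$ for every closed point $x\in X$. I argue by contradiction: suppose there is a closed point $x\in X$ with $d:=[k(x):k]$ coprime to $p$, and set $K=k(x)$, so that $X(K)\ne\emptyset$.

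Next I would run the chain of implications triggered by this $K$-point. Choose any cocharacter $\varphi\in T_*=\zz$ coprime to $p$, for instance $\varphi=1$. Assumption (2) applied over $K$ supplies a parabolic subgroup of $G'_K$ defined over $K$ whose type lies in $\Omega(\varphi)$; by Lemma \ref{l3} applied over $K$ (identifying parabolics of $G$ with those of $G'$), this shows that $\varphi$ is $f_K$-special. The norm principle Lemma \ref{l1} then upgrades this to the assertion that the cocharacter $d\varphi\in T_*$ is $f$-special over $k$. Applying Lemma \ref{l3} over $k$ in the reverse direction, together with Lemma \ref{l4}, I obtain simultaneously a parabolic subgroup of $G$ defined over $k$ whose type lies in $\Omega(d\varphi)$ and the inclusion $0\in\beta(X(d\varphi))\subset\mathrm{Br}(k)$.

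Finally, since $d$ and $\varphi$ are both coprime to $p$, so is the cocharacter $d\varphi$, and thus the hypotheses of assumption (1) are in place over $k$ with this cocharacter. Assumption (1) then produces a $k$-point of $X$, contradicting the anisotropy of $X$ and completing the argument. The reduction to closed points is routine and the two applications of Lemma \ref{l3} are immediate; the genuine content is packaged inside Lemma \ref{l1}, which is the Gille--Merkurjev norm principle. The only delicate bookkeeping point is that one must check that being coprime to $p$ for a cocharacter in $T_*=\zz$ is preserved under multiplication by $d$, which is precisely where the coprimality hypothesis on $d$ enters, so I do not expect a serious obstacle beyond organizing the chain of lemmas in the correct order.
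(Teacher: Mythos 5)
Your argument is correct and follows essentially the same route as the paper: the paper packages the chain ``item 2 $\Rightarrow$ Lemma~\ref{l3} $\Rightarrow$ $f_K$-special'' and ``$f$-special $\Rightarrow$ Lemmas~\ref{l4},~\ref{l3} $\Rightarrow$ item 1'' into a single Claim ($X(K)\ne\emptyset$ iff every coprime-to-$p$ cocharacter is $f_K$-special) and then applies Lemma~\ref{l1} exactly as you do. Your inlined version, including the observation that $d\varphi$ remains coprime to $p$, matches the paper's proof step for step.
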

\begin{proof}
Let $K/k$ be a field extension. We show first the following
\begin{claim}
$X(K)\ne\emptyset$ if and only if any coprime to $p$ cocharacter $\varphi$
is $f_K$-special.
\end{claim}
Indeed, if $X(K)\ne\emptyset$, then by item~2 there is a parabolic subgroup
of $G'$ defined over $K$ whose type is contained in $\Omega(\varphi)$.
By Lemma~\ref{l3} $\varphi$ is $f_K$-special.

Conversely, if $\varphi$ is $f_K$-special, then
by Lemma~\ref{l4} we have $0\in\beta_K(X(\varphi))$,
and by Lemma~\ref{l3} there is a parabolic subgroup of $G'$ defined over $K$
of type contained in $\Omega(\varphi)$.
Therefore by item~1 we have $X(K)\ne\emptyset$.

Let now $K/k$ be a finite field extension such that $X(K)\ne
\emptyset$. To finish the proof of the theorem it sufficies to show
that $[K:k]$ is divisible by $p$. Assume the converse.

Since $X(K)\ne\emptyset$, by Claim any coprime to $p$ cocharacter
$\varphi$ is $f_K$-special. By Lemma~\ref{l1} the cocharacter
$[K:k]\varphi$ is $f$-special. Therefore by Claim $X(k)\ne\emptyset$.
Contradiction.
\end{proof}

\section{Applications}

\begin{cor}[Springer's theorem]
Let $A$ be a central simple $k$-algebra of degree $2n\ge 4$ with 
an orthogonal involution $\sigma$ of the first kind.
Let $X$ be the variety of isotropic with respect to $\sigma$ right ideals
of $A$ of dimension $2n$. Assume $X$ is anisotropic.

Then $\deg(\CH_0(X))\subset 2\zz$. In particular, if $X$ is an
anisotropic smooth even-dimensional projective quadric, then $\deg(\CH_0(X))=2\zz$.
\end{cor}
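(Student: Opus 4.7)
My plan is to apply Theorem~\ref{main} with $p=2$. I would set $G'=\mathrm{Spin}(A,\sigma)$, the simply connected group of type ${}^1\mathrm{D}_n$, and embed it in a reductive group $G$ together with an exact sequence $1\to G_1\to G\overset{f}{\to}\mathbb{G}_m\to 1$; for instance one may take $G=\mathrm{GSpin}(A,\sigma)$ with $f$ the multiplier character. Under the standard identification of parabolics with subsets of $\Delta$, the variety $X$ of $\sigma$-isotropic right ideals of $k$-dimension $2n$ coincides with the twisted flag variety $G'/P$ for $P$ of type $\Delta\setminus\{\alpha_1\}$; in particular $X(K)\ne\emptyset$ is equivalent to $G'_K$ admitting a parabolic of that type, which makes condition~(2) of Theorem~\ref{main} immediate once one checks that $\Delta\setminus\{\alpha_1\}\in\Omega(\varphi)$.

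Next I would fix a coprime-to-$2$ cocharacter $\varphi\in T_*\cong\zz$ and compute the sets $X(\varphi)\subset C^{*\Gamma}$ and $\Omega(\varphi)$ using the description $C^*=\{0,\chi,\chi^+,\chi^-\}$ together with the Tits diagram of type $\mathrm{D}_n$ recorded in~\ref{tits}: the restriction $\w_1|_C=\chi$ has Tits class $[A]$, and $\w_{n-1}|_C,\w_n|_C$ equal $\chi^\pm$ with classes $[C^\pm(A,\sigma)]$. A direct calculation from this data shows that $\Delta\setminus\{\alpha_1\}$ indeed belongs to $\Omega(\varphi)$.

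The main obstacle is verifying condition~(1): for every field extension $K/k$ and every coprime-to-$2$ cocharacter $\varphi$, if $0\in\beta_K(X(\varphi))$ and $G'_K$ admits a parabolic of some type $\Theta\in\Omega(\varphi)$ over $K$, I must conclude $X(K)\ne\emptyset$. I would enumerate the elements of $\Omega(\varphi)$ and argue case by case: existence of a $K$-rational parabolic of type $\Theta$ constrains the Tits index of $G'_K$, while the vanishing extracted from $0\in\beta_K(X(\varphi))$ forces $[A]_K=0$, so $(A,\sigma)_K$ is adjoint to a quadratic form; any non-trivial isotropy of that form then supplies a $\sigma$-isotropic right ideal of $K$-dimension $2n$, i.e.\ a $K$-point of~$X$.

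With both conditions verified, Theorem~\ref{main} yields $\deg(\CH_0(X))\subset 2\zz$. For the final assertion, when $A$ is split and $\sigma$ is adjoint to a nondegenerate quadratic form of trivial discriminant in dimension~$2n$, the variety $X$ is precisely the associated anisotropic projective quadric of even dimension $2n-2$, and the inclusion above is Springer's theorem. The reverse inclusion $2\zz\subset\deg(\CH_0(X))$ is classical, since a sufficiently general $k$-rational line in $\mathbb{P}^{2n-1}$ meets the anisotropic quadric $X$ in a closed point of degree exactly~$2$.
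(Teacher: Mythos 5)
Your proposal is correct and follows essentially the same route as the paper: the exact sequence $1\to\mathrm{Spin}(A,\sigma)\to G\overset{f}{\to}\mathbb{G}_m\to 1$ (the paper takes $G$ to be the Clifford group $\Gamma(A,\sigma)$ with $f$ the spinor norm, which is the same as your $\mathrm{GSpin}(A,\sigma)$ with its natural character), the computation $X(\varphi)=\{\chi\}$ with $\beta(\chi)=[A]$ for odd $\varphi$, and the verification of the two conditions of Theorem~\ref{main} exactly as you outline. The only material you add is the explicit argument for the reverse inclusion $2\zz\subset\deg(\CH_0(X))$ in the split quadric case, which the paper leaves implicit.
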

\begin{proof}(\cite[Lemma~6.2]{Me96}).
There is the following exact sequence of groups:
$$1\to G_1=\mathrm{Spin}(A,\sigma)\to G=\Gamma(A,\sigma)\overset{f}{\to} \mathbb{G}_m\to 1,$$
where $\Gamma(A,\sigma)$ is the Clifford group and $f$ is the {\it spinor norm} homomorphism.

Let $p=2$. It is easy to check that for any odd cocharacter $\varphi$
the set $X(\varphi)=\{\chi\}$, where $\chi$ is the restriction of $\w_1$
to the center $C$.

Let $K/k$ be a field extension. If $0\in\beta_K(X(\varphi))$, then
the algebra $A_K$ is split (see \ref{tits}). Thus, $\sigma_K$
corresponds to a quadratic form, and $X_K$ is a projective quadric.
If additionally $G'_K$ has a parabolic subgroup defined over $K$
of type contained in $\Omega(\varphi)$, then it easy to see that
this quadratic form is isotropic, and thus $X(K)\ne\emptyset$.

Finally, if $X(K)\ne\emptyset$, then
$G'$ has a parabolic subgroup of type $\Delta\setminus\{\alpha_1\}$,
where $\alpha_1$ is the first simple root. But $\Delta\setminus\{\alpha_1\}
\in\Omega(\varphi)$.

Thus, we checked all conditions of Theorem~\ref{main}.
\end{proof}

\begin{cor}[Bayer-Fluckiger and Lenstra]
Let $A$ be a central simple algebra of degree $2n\ge 4$ with
an orthogonal involution $\sigma$ of the first kind.
Let $Y$ be the variety
of $2n^2$-dimensional isotropic right ideals of $A$ and
$$X=Y\times\mathrm{Spec}(k[t]/(t^2-\mathrm{disc}(\sigma)).$$
Assume $X$ is anisotropic.
Then $\deg(\CH_0(X))\subset 2\zz.$
\end{cor}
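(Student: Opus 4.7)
My plan is to apply Theorem~\ref{main} with $p=2$, splitting according to whether $d:=\mathrm{disc}(\sigma)$ is a square in $k$. If $d\notin k^{*2}$, then any field extension $K/k$ with $X(K)\ne\emptyset$ must admit a root of $t^2-d$, so $k(\sqrt d)\subseteq K$ and $2\mid[K:k]$. Consequently $\deg(\CH_0(X))\subset 2\zz$ follows in this case with no further work.

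Assume henceforth that $d\in k^{*2}$. Then $\sigma$ has trivial discriminant, so $G':=\mathrm{Spin}(A,\sigma)$ is of inner type $^1\mathrm{D}_n$, and $Y$ decomposes as $Y^+\sqcup Y^-$, the two components of the variety of maximal isotropic right ideals of $A$ (twisted forms of half-spin varieties); moreover $X\cong Y\sqcup Y$. To apply Theorem~\ref{main} I construct an exact sequence
\[
1\to G_1=\mathrm{Spin}(A,\sigma)\to G\to T=\mathbb{G}_m\to 1
\]
using a suitable reductive extension $G$ of $\mathrm{Spin}(A,\sigma)$, analogous to the Clifford group $\Gamma(A,\sigma)$ used in the Springer case but coupled with an additional $\mathbb{G}_m$-factor through an appropriate $\mu_2$-subgroup of $C$. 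The objective is to arrange, following section~\ref{sec23}, that for every odd cocharacter $\varphi\in T_*=\zz$ one has $X(\varphi)=\{\chi^+,\chi^-\}\subseteq C^{*\Gamma}$, where $\chi^\pm$ are the restrictions to $C$ of the two half-spin fundamental weights $\w_{n-1}$, $\w_n$.

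Granted this identification of $X(\varphi)$, the verification of the hypotheses of Theorem~\ref{main} is straightforward. By the diagram for type $\mathrm{D}_n$ in section~\ref{tits}, the Tits homomorphism sends $\chi^\pm$ to $[C^\pm(A,\sigma)]$, so $0\in\beta_K(X(\varphi))$ amounts to splitting of at least one of $C^+(A_K,\sigma_K)$, $C^-(A_K,\sigma_K)$. The $\Gamma$-orbits $\{\alpha_{n-1}\}$ and $\{\alpha_n\}$ contribute $\chi^+$ and $\chi^-$ to the generating set, so both $\Delta\setminus\{\alpha_{n-1}\}$ and $\Delta\setminus\{\alpha_n\}$ belong to $\Omega(\varphi)$. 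Condition~2 then follows because a $K$-point of $X$ gives a $K$-point of $Y^\epsilon$ for some $\epsilon\in\{+,-\}$, hence a $K$-parabolic of type $\Delta\setminus\{\alpha_{n-1}\}$ or $\Delta\setminus\{\alpha_n\}$. Condition~1 follows because the splitting of $C^\epsilon(A_K,\sigma_K)$ combined with a $K$-parabolic of type in $\Omega(\varphi)$ forces $Y^\epsilon(K)\ne\emptyset$, and then $X(K)\ne\emptyset$ since $\sqrt d\in k\subseteq K$.

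The main obstacle is constructing $G$ and verifying that $X(\varphi)=\{\chi^+,\chi^-\}$ for odd $\varphi$. This amounts to bookkeeping with the connecting cocharacter map $\beta\colon {Z'}^{\Gamma}_*\to\mu(-1)^\Gamma$ coming from $1\to\mu\to Z\to Z'\to 1$, the map $\alpha\colon {Z'}^{\Gamma}_*\to T_*$ (which must be chosen to be the identity, so that $\alpha^{-1}(\varphi)$ is a singleton for odd $\varphi$), and the surjection $\gamma\colon C_*^\Gamma\to\mu(-1)^\Gamma$. One needs $G$ to be set up so that $\beta(\alpha^{-1}(\varphi))$ lands on the element of $\mu(-1)^\Gamma$ whose $\gamma$-preimage in $C^{*\Gamma}$ is precisely $\{\chi^+,\chi^-\}$.
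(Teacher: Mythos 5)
The surrounding reductions in your proposal are sound and close to the paper's: the case $\mathrm{disc}(\sigma)\notin k^{*2}$ is indeed trivial (every residue field of a closed point of $X$ contains $k(\sqrt{d})$, so all degrees are even; the paper absorbs this case into the formula $X(\varphi)=\emptyset$ instead), and your verifications of conditions~1 and~2 of Theorem~\ref{main}, \emph{granted} that $X(\varphi)=\{\chi^+,\chi^-\}$ for odd $\varphi$, agree with the paper. But the proof has a genuine gap exactly where you flag ``the main obstacle'': you never construct the group $G$ nor verify $X(\varphi)=\{\chi^+,\chi^-\}$, and this is the entire content of the argument. The paper supplies it concretely (following \cite[6.3]{Me96}) by taking
$1\to G_1=\mathrm{O}^+(A,\sigma)\to G=\mathrm{GO}^+(A,\sigma)\to\mathbb{G}_m\to 1$
with $f$ the multiplier map.

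Worse, the setup you propose cannot be made to work. You insist on $G_1=\mathrm{Spin}(A,\sigma)$. Since $\mathrm{Spin}(A,\sigma)$ is semisimple and $G/G_1\cong\mathbb{G}_m$ is abelian, one gets $G'=[G,G]=G_1=\mathrm{Spin}(A,\sigma)$, hence $C=\mu$ and the map $\gamma\colon C_*^\Gamma\to\mu(-1)^\Gamma$ of \ref{sec23} is bijective; with $\alpha$ the identity (as you require), $X(\varphi)=\gamma^{-1}(\beta(\varphi))$ is a singleton (or empty), so it can never equal the two-element set $\{\chi^+,\chi^-\}$. The two-element fiber is only possible when $\ker\gamma$ has order $2$, i.e., when $G'$ is the \emph{adjoint-side} quotient $\mathrm{O}^+(A,\sigma)$ with center $\mu_2$ strictly smaller than $C$ --- which is precisely why the paper uses $\mathrm{O}^+\subset\mathrm{GO}^+$ rather than $\mathrm{Spin}$. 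So the missing construction is not mere bookkeeping: as stated, your plan is internally inconsistent and must be replaced by the $\mathrm{GO}^+$ sequence.
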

\begin{proof}(\cite[6.3]{Me96}).
Consider the following exact sequence of groups:
$$1\to G_1=\mathrm{O}^+(A,\sigma)\to G=\mathrm{GO}^+(A,\sigma)
\overset{f}{\to}\mathbb{G}_m\to 1,$$
where $f$ is the {\it multiplier map}.

Let $p=2$. Denote as $\chi^+$ (resp. $\chi^-$) the
restriction of the fundamental weight $\w_{n-1}$ (resp. $\w_n$)
to the center.
It is easy to check that for any odd cocharacter $\varphi$ we have
$$X(\varphi)=\begin{cases}
\emptyset, & \mathrm{disc}(\sigma)\ne 1;\\
\{\chi^+,\chi^-\}, & \mathrm{disc}(\sigma)=1.
\end{cases}$$

Finally, if $0\in\beta_K(X(\varphi))$, then $\mathrm{disc}(\sigma_K)=1$.
Then the variety $Y_K$ is the disjoint union of varieties
of parabolic subgroups of $G'_K$ of types $\Delta\setminus\{\alpha_{n-1}\}$
and $\Delta\setminus\{\alpha_n\}$.
If additinally $G'$ has a parabolic subgroup defined over $K$
of type contained in $\Omega(\varphi)$, then the Tits index $\Delta_0$
of $G'_K$ contains at most one of the roots $\alpha_{n-1}$, $\alpha_n$
(see \ref{tits}).
Therefore in this case $X(K)\ne\emptyset$.

To finish the proof of the corollary it remains to notice that condition~2
of Theorem~\ref{main} is obvious.
\end{proof}

Using similar arguments one can show the following well-known statement.
Opposite to the traditional approach our proof does not use cohomological
invariants of Albert algebras.

\begin{cor}[M.~Rost]
Let $G_1$ be a simply connected algebraic group of type $^1{\E_6}$
over $k$ and $X$ the variety of its parabolic subgroups
of type $1$ (resp. $6$). Assume $X$ is anisotropic.
Then $\CH_0(X)\subset 3\zz$.
\end{cor}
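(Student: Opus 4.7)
The plan is to apply Theorem~\ref{main} with $p=3$, in the same style as the previous two corollaries. Let $G_1$ be simply connected of type $^1\E_6$ with center $\mu=\mu_3$ (acting trivially under Galois). Form the exact sequence
$$1\to G_1\to G=(G_1\times\mathbb{G}_m)/\mu_3\overset{f}{\to}\mathbb{G}_m\to 1,$$
where $\mu_3$ is embedded diagonally via $\mu_3\subset\mathbb{G}_m$, so that $G'=G_1$, $Z'=T=\mathbb{G}_m$, and $\alpha$ is the identity. The center $Z=Z(G)\cong\mathbb{G}_m$ maps to $Z'$ by the cube map, $\gamma\colon C^{*\Gamma}\to\mu(-1)^\Gamma$ is an isomorphism (since $C=\mu$), and a direct computation gives
$$X(\varphi)=\{\bar\varphi\cdot\chi\}\subset C^*=\zz/3$$
for every $\varphi\in\zz=T_*$, where $\chi:=\w_1|_C$ is a generator.

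By the $\E_6$ Tits-algebra diagram in~\ref{tits}, $\beta(\chi)=[A]$, so for $\varphi$ coprime to $3$ the condition $0\in\beta_K(X(\varphi))$ amounts to $A_K$ being split. Since the $\Gamma$-orbits on $\Delta$ are singletons and $\w_i|_C$ is nonzero precisely for $i\in\{1,3,5,6\}$, a subset $\Theta\subset\Delta$ belongs to $\Omega(\varphi)$ iff at least one index of $\{1,3,5,6\}$ lies outside $\Theta$. In particular $\Delta\notin\Omega(\varphi)$, while $\Delta\setminus\{\alpha_1\}$ and $\Delta\setminus\{\alpha_6\}$ both lie in $\Omega(\varphi)$.

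To verify condition~(1) of Theorem~\ref{main}, assume $A_K$ splits and $G'_K$ has a parabolic of some type $\Theta\in\Omega(\varphi)$ over $K$. Since $\Theta\ne\Delta$, the group $G'_K$ admits a proper parabolic over $K$ and is therefore isotropic. By Tits' classification of inner forms of $\E_6$ with trivial Tits algebra, the only isotropic indices are the split one and the one with circled set $\{\alpha_1,\alpha_6\}$ (anisotropic kernel of type $\mathrm{D}_4$); in both $\alpha_1$ is circled, so the parabolic of type $\Delta\setminus\{\alpha_1\}$ is defined over $K$, i.e., $X(K)\ne\emptyset$. Condition~(2) is immediate: if $X(K)\ne\emptyset$ then $G'_K$ has a parabolic of type $\Delta\setminus\{\alpha_1\}\in\Omega(\varphi)$. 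The case of parabolics of type~$6$ is handled identically with $\alpha_6$ in place of $\alpha_1$. The main obstacle is the appeal to Tits' classification of admissible indices in condition~(1); this is the analogue of the ``Tits index has at most one of $\alpha_{n-1},\alpha_n$ circled'' step used in the Bayer-Fluckiger corollary.
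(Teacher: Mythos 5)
Your proof is correct and follows the paper's overall strategy (apply Theorem~\ref{main} with $p=3$), but it departs from the paper's argument at two genuine points. First, the ambient group: the paper begins by disposing of the case where $G_1$ has a nontrivial Tits algebra (there the statement is immediate, since $X(K)\ne\emptyset$ forces the Tits algebras of $(G_1)_K$ to split and these have exponent $3$), and only then, for trivial Tits algebras, realizes $G_1$ as the norm-one similarities of an Albert algebra $J$ and takes $G$ to be the full similarity group with $f$ the multiplier map. Your envelope $G=(G_1\times\mathbb{G}_m)/\mu_3$ exists for every $G_1$ of type $^1{\E_6}$, produces the same $Z'$, $\mu$, $C$, $Z$ and the same $X(\varphi)$ (a nonzero singleton of $C^{*}\cong\zz/3$), and thus lets you bypass both the Albert algebra and the preliminary case distinction --- a mild but real streamlining. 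Second, the verification of condition~(1): you rule out the index with distinguished vertices $\{\alpha_2,\alpha_4\}$ (anisotropic kernel of type $2\mathrm{A}_2$) by appealing to the splitting of $A_K$, which implicitly uses the fact that the Tits algebras of the semisimple anisotropic kernel are Brauer-equivalent to those of $G$, so that a split $A_K$ forces the $2\mathrm{A}_2$ kernel to be isotropic; the paper instead gets a contradiction straight from the definition of $\Omega(\varphi)$: if the index were $\Delta\setminus\{\alpha_2,\alpha_4\}$, every parabolic defined over $K$ would have type containing $\Delta\setminus\{\alpha_2,\alpha_4\}$, and since $\w_2\vert_C=\w_4\vert_C=0$ no such type lies in $\Omega(\varphi)$. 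The paper's variant needs only the list of admissible indices from \cite{Ti65} together with the restrictions $\w_i\vert_C$ you already computed, so it is marginally more self-contained, whereas yours leans on an additional (standard) piece of Tits's theory; both are valid. Everything else --- the computation of $X(\varphi)$ and $\Omega(\varphi)$, the identification $0\in\beta_K(X(\varphi))\Leftrightarrow A_K$ split, and condition~(2) --- matches the paper.
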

\begin{proof}
If $G_1$ has a non-trivial Tits algebra, then the statement is obvious,
since for a field extension $K/k$ condition $X(K)\ne\emptyset$ implies
that the Tits algebras of $(G_1)_K$ are split.

Assume that $G_1$ has trivial Tits algebras.
Let $J$ be an Albert algebra associated with $G_1$ (see \cite{Ga01a}).
A $k$-linear map $$h\colon J\to J$$ is called
a {\it similarity} if there exists $\alpha_h\in k^\times$ (the multiplier of $h$)
such that $$N(h(j))=\alpha_h N(j)$$ for all $j\in J$, where $N$ stands
for the cubic norm on $J$.
Then $G_1$ coincides with the similarities of this Jordan algebra
with multiplier $1$. Let $G$ be the group of all similarities.
Then $G$ is a reductive group and there is the following
exact sequence of algebraic groups:
$$1\to G_1\to G\overset{f}{\to} T=\mathbb{G}_m\to 1,$$
where the map $f$ is defined on $k$-points as $h\mapsto\alpha_h$.

Let $p=3$ and let $\varphi\in T_*=\zz$ be a cocharacter coprime to $3$.
We check now the conditions of Theorem~\ref{main}.

First we compute $X(\varphi)$. In our situation $G'=[G,G]=G_1$,
$Z'=T=\mathbb{G}_m$, $\mu=\mu_3$, $C=\mu_3$, $Z=\mathbb{G}_m$,
and the group
$$C_*\simeq C^*=\zz/3=\{0,\w_1\vert_C,\w_6\vert_C=-\w_1\vert_C\},$$
where $\w_i\vert_C$ denotes the restriction of the $i$-th fundamental
weight of $G_1$ to the center, $i=1,6$.
Therefore, $X(\varphi)=\{\w_1\vert_C\}$ or
$X(\varphi)=\{\w_6\vert_C\}$ (it depends on $\varphi$ mod $3$).

Let $K/k$ be a field extension. Assume first that $G'_K$ is isotropic
and the type of a parabolic subgroup $P$ of $G'$ defined over $K$
is contained in $\Omega(\varphi)$.
If the parabolic subgroup of $G'$ of type $1$ is not defined, then
by Tits's classification \cite[p.~58]{Ti65}
the Tits index of $G'$ equals $\Delta_0=\Delta\setminus\{\alpha_2,\alpha_4\}$.
But the restrictions
to the center of the $2$-nd and of the $4$-th fundamental weights are trivial
(see \cite[p.~653]{Ti90} or \ref{tits}). This contradicts to the assumption that
the type of $P$ is contained in $\Omega(\varphi)$.

Finally, condition~2 of Theorem~\ref{main} is obvious.
\end{proof}

\begin{rem}
If the Tits algebras of $G_1$ are trivial, then the image of the
degree homomorphism $\CH_0(X)\to\zz$ equals $3\zz$.
\end{rem}

\begin{cor}
Let $G_1$ be a simply connected algebraic group of
type $\E_7$ over $k$ and $X$ the variety of
maximal parabolic subgroup of $G_1$ of type $7$. Assume $X$ is anisotropic.
Then $\CH_0(X)\subset 2\zz$.
\end{cor}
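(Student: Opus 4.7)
The plan is to apply Theorem~\ref{main} with $p=2$, mirroring the argument used for $\E_6$. First I would embed $G_1$ into a reductive group $G$ fitting into an exact sequence
$$1\to G_1\to G\overset{f}{\to}T=\mathbb{G}_m\to 1,$$
for example $G=(G_1\times\mathbb{G}_m)/\mu_2$ with $\mu_2=Z(G_1)$ embedded diagonally and $f([g,t])=t^2$. Then $G'=G_1$, $Z'=T=\mathbb{G}_m$, $Z=\mathbb{G}_m$, and $\mu=C=\mu_2$.

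Next, fix a cocharacter $\varphi\in T_*=\zz$ coprime to $2$. Since $C^*\simeq C_*\simeq\zz/2=\{0,\chi\}$, a short chase of the maps $\alpha,\beta,\gamma$ of~\ref{sec23} yields $X(\varphi)=\{\chi\}$, and from the $\E_7$ diagram in~\ref{tits} one reads off $\chi=\w_2|_C=\w_5|_C=\w_7|_C$ while $\w_i|_C=0$ for $i\in\{1,3,4,6\}$. Since $\E_7$ has no outer forms, the $\Gamma$-action on $\Delta$ is trivial and $\Gamma$-orbits are singletons; hence $\Theta\in\Omega(\varphi)$ iff $\Delta\setminus\Theta$ meets $\{2,5,7\}$, i.e.\ $\{2,5,7\}\not\subset\Theta$.

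Condition~2 of Theorem~\ref{main} is then immediate: if $X(K)\ne\emptyset$, the $K$-parabolic of type $\Delta\setminus\{\alpha_7\}$ is defined and its type plainly lies in $\Omega(\varphi)$. The substantive step is Condition~1. Assume $0\in\beta_K(X(\varphi))$, so the Tits algebra $A$ of $G_1$ splits over $K$, and assume $(G_1)_K$ admits a $K$-parabolic whose type $\Theta$ is in $\Omega(\varphi)$; then the Tits index $\Delta_0$ of $(G_1)_K$ is contained in $\Theta$, so some $\alpha_i$ with $i\in\{2,5,7\}$ must be distinguished. At this point I would invoke Tits's classification \cite{Ti65}: the only nonsplit isotropic index of a simply connected $\E_7$-group with split Tits algebra is $\E_{7,4}^9$, whose distinguished roots are $\{\alpha_1,\alpha_3,\alpha_4,\alpha_6\}$, disjoint from $\{\alpha_2,\alpha_5,\alpha_7\}$. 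Hence $(G_1)_K$ must be split over $K$, so in particular $\alpha_7$ is distinguished and $X(K)\ne\emptyset$. The main obstacle is precisely this classification step; one must verify in Tits's tables that no nonsplit index of simply connected $\E_7$ with split Tits algebra has $\alpha_2$, $\alpha_5$, or $\alpha_7$ distinguished. Granting this, both hypotheses of Theorem~\ref{main} are satisfied and one concludes $\deg(\CH_0(X))\subset 2\zz$.
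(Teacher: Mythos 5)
Your overall architecture matches the paper's: apply Theorem~\ref{main} with $p=2$, compute $X(\varphi)=\{\chi\}$ and $\Omega(\varphi)$ from the Tits-algebra diagram in~\ref{tits}, note that condition~2 is trivial, and settle condition~1 via Tits's classification. Your replacement of the paper's ambient group (the similarity group of the gift $(A,\sigma,\pi)$, with $f$ the multiplier map) by $G=(G_1\times\mathbb{G}_m)/\mu_2$, $f([g,t])=t^2$, is legitimate: this $G$ is reductive with $G'=G_1$, the maps $\alpha$ and $\gamma$ of~\ref{sec23} are identities and $\beta$ is reduction mod~$2$, so you get the same $X(\varphi)=\{\chi\}$, and Lemmas~\ref{l1}, \ref{l3}, \ref{l4} apply to any such exact sequence. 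Your computation of $\Omega(\varphi)$ from $\w_i\vert_C\ne 0\iff i\in\{2,5,7\}$ is also correct.

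The gap is in the classification step, and it is not merely a deferred verification: the statement you propose to check against Tits's tables is false. The index $\E_{7,4}^{9}$ (distinguished roots $\{\alpha_1,\alpha_3,\alpha_4,\alpha_6\}$) occurs only when the Tits algebra $A$ is a quaternion \emph{division} algebra --- its anisotropic kernel is a product of three anisotropic groups of type $\mathrm{A}_1$ at the vertices $2,5,7$, whose Tits algebras are Brauer-equivalent to $\beta_K(\w_7\vert_C)=A_K$ --- so it is \emph{excluded}, not singled out, by the hypothesis $0\in\beta_K(X(\varphi))$. Conversely, several nonsplit isotropic indices do occur with split Tits algebra: $\E_{7,1}^{66}$ (distinguished $\{\alpha_1\}$), $\E_{7,1}^{48}$ (distinguished $\{\alpha_7\}$), and $\E_{7,3}^{28}$ (distinguished $\{\alpha_1,\alpha_6,\alpha_7\}$). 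The last two have $\alpha_7$ distinguished, so your claim that ``no nonsplit index with split Tits algebra has $\alpha_2$, $\alpha_5$, or $\alpha_7$ distinguished'' fails, and your inference that $(G_1)_K$ must be split is wrong (a group of index $\E_{7,3}^{28}$ is a counterexample). What actually closes the argument, and what the paper uses, is the weaker statement: among the isotropic indices compatible with a split Tits algebra, the only one in which $\alpha_7$ is \emph{not} distinguished is $\E_{7,1}^{66}$, whose sole distinguished root $\alpha_1$ has $\w_1\vert_C=0$ and therefore cannot yield a parabolic type lying in $\Omega(\varphi)$. Hence $\alpha_7$ is distinguished and $X(K)\ne\emptyset$, with no need for $(G_1)_K$ to be split. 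You should replace your classification claim by this one.
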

\begin{proof}
Let $(A,\sigma,\pi)$, where $A$ is a central simple
$k$-algebra with a symplectic involution $\sigma$ and
$\pi\colon A\to A$ a linear map,
be {\it a gift} associated with $G_1$ (see \cite{Fe72}, \cite{Ga01a} and \cite{Ga01b}).
An invertible element $h\in A$ is called
a {\it similarity} if there exists $\alpha_h\in k^\times$ (the multiplier of $h$)
such that $$\sigma(h)h=\alpha_h\cdot 1$$ and
$$\pi(hah^{-1})=\alpha_h h\pi(a)h^{-1}$$ for all $a\in A$.
Then $G_1$ coincides with the similarities of this gift
with multiplier $1$. Let $G$ be the group of all similarities.
Then $G$ is a connected reductive group and there is the following
exact sequence of algebraic groups:
$$1\to G_1\to G\overset{f}{\to} T=\mathbb{G}_m\to 1,$$
where the map $f$ is defined on $k$-points as $h\mapsto\alpha_h$.

Let $p=2$ and let $\varphi$ be an odd cocharacter.

First we compute $X(\varphi)$. In our situation $G'=[G,G]=G_1$,
$Z'=T=\mathbb{G}_m$, $\mu=\mu_2$, $C=\mu_2$, and $Z=\mathbb{G}_m$.
Thus, the maps $\alpha$ and $\gamma$ from \ref{sec23} are
the identity maps. The map $\beta\colon Z'_*=\zz\to\mu_2(-1)=\zz/2$
from \ref{sec23} is the
usual factor-map. Therefore, $X(\varphi)=\{\chi\}$, where
as $\chi$ we denote a unique non-trivial element of $C_*\simeq C^*$.

Let $K/k$ be a field extension. Assume that $0\in\beta_K(X(\varphi))$,
$G'_K$ is isotropic and the type of a parabolic subgroup of $G'$
defined over $K$ is contained in $\Omega(\varphi)$.
The first assumption implies that the Tits algebra $A$ of $G_1$ is split.

If the parabolic subgroup of $G'$ of type $7$ is not defined, then
by Tits's classification \cite[p.~59]{Ti65}
the Tits index of $G'$ equals $\Delta_0=\Delta\setminus\{\alpha_1\}$.
But the restriction
to the center of the $1$-st fundamental weight is trivial
(see \cite[p.~653]{Ti90}).
Therefore we have $X(K)\ne\emptyset$.

Finally, condition~2 of Theorem~\ref{main} is obvious.
\end{proof}

\begin{rem}
If the Tits algebras of $G_1$ are trivial, then the image of the
degree homomophism $\CH_0(X)\to\zz$ equals $2\zz$.
\end{rem}

\begin{cor}
A group $G_1$ as in the statement of the previous corollary does not
split over an odd degree field extension.
\end{cor}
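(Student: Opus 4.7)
The plan is to derive this statement as an immediate consequence of the previous corollary via a standard zero-cycle contradiction argument. Suppose, for contradiction, that there exists a finite field extension $K/k$ of odd degree over which $G_1$ becomes split. Since a split reductive group has all its parabolic subgroups defined over the base field, the parabolic subgroup of $(G_1)_K$ of type $7$ is defined over $K$; this is precisely the statement that $X(K) \neq \emptyset$, where $X$ is the variety appearing in the previous corollary.

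From $X(K) \neq \emptyset$ I would extract a closed point of $X$ whose residue field embeds into $K$, and whose degree over $k$ therefore divides $[K:k]$ and is consequently odd. The class of such a closed point in $\CH_0(X)$ has odd degree, so $\deg(\CH_0(X))$ contains an odd integer. This directly contradicts the previous corollary, which asserts $\deg(\CH_0(X)) \subset 2\zz$ under the assumption that $X$ is anisotropic (a hypothesis which is shared, and which we may use without further comment here).

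I do not foresee any real obstacle. All of the genuine content, namely the computation of $X(\varphi)$ for odd $\varphi$, the identification of $\Omega(\varphi)$, and the appeal to the Gille--Merkurjev norm principle via Theorem~\ref{main}, has already been carried out in the proof of the previous corollary. The present statement is a purely formal consequence of the fact that splitting of $G_1$ over $K$ produces a $K$-rational point on every projective $G_1$-homogeneous variety, together with the elementary observation that a rational point over an odd degree extension yields a zero-cycle of odd degree.
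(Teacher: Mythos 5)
Your proposal is correct and is exactly the intended argument: the paper states this corollary without proof, treating it as an immediate formal consequence of the preceding one, and your deduction (a split group over $K$ has all parabolics defined over $K$, hence $X(K)\ne\emptyset$, hence a closed point of odd degree, contradicting $\deg(\CH_0(X))\subset 2\zz$) is precisely that omitted step. No gaps.
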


\bibliographystyle{chicago}

\medskip

\noindent
{\sc UMR 8553 du CNRS,
\'Ecole Normale Sup\'erieure,
45 Rue d'Ulm,
75005 Paris,
France}

\medskip

\noindent
{\sc Mathematisches Institut,
Universit\"at M\"unchen,
Theresienstr. 39,
80333 M\"unchen,
Germany}
\end{document}